\newtheorem{thm}{Theorem}[section]
\newtheorem{cor}[thm]{Corollary}
\newtheorem{lem}[thm]{Lemma}
\newtheorem{exm}[thm]{Example}
\newtheorem{prop}[thm]{Proposition}
\theoremstyle{definition}
\newtheorem{defn}[thm]{Definition}
\theoremstyle{remark}
\newtheorem{rem}[thm]{\bf Remark}
\numberwithin{equation}{section}
\begin{document}
\title[Comparing $\tau$-tilting modules and $1$-tilting modules]{Comparing $\tau$-tilting modules and $1$-tilting modules}
\author[Chen, Li, Zhang, Zhao] {Xiao-Wu Chen, Zhi-Wei Li, Xiaojin Zhang$^*$, Zhibing Zhao}

\makeatletter
\@namedef{subjclassname@2020}{\textup{2020} Mathematics Subject Classification}
\makeatother

\thanks{$^*$ The corresponding author}
\date{\today}
\subjclass[2020]{18G25, 18G65, 18G05, 18G20}

\thanks{xwchen$\symbol{64}$mail.ustc.edu.cn, zhiweili@jsnu.edu.cn, xjzhang@jsnu.edu.cn, zbzhao@ahu.edu.cn}
\keywords{tilting module, $\tau$-tilting module, self-orthogonal module, delooping level, homological conjecture}

\maketitle

\dedicatory{}%
\commby{}%

\begin{abstract}
We characterize $\tau$-tilting modules as $1$-tilting modules over quotient algebras satisfying a tensor-vanishing condition, and characterize $1$-tilting modules as $\tau$-tilting modules satisfying a ${\rm Tor}^1$-vanishing condition. We use  delooping levels to study  \emph{Self-orthogonal $\tau$-tilting Conjecture}: any self-orthogonal $\tau$-tilting module is $1$-tilting. We confirm the conjecture when the endomorphism algebra of the module has finite global delooping level.  
\end{abstract}

\section{Introduction}
 The $\tau$-tilting theory \cite{AIR} is a natural generalization of the classical tilting theory \cite{HR, Bon}, and related ideas might be traced back to \cite{AS}. It is closely related to the silting theory \cite{AI} and cluster tilting theory \cite{IY}; see also \cite{DF}. 

Let $A$ be an artin algebra. The central objects in $\tau$-tilting theory are $\tau$-tilting modules. It is well known that a $\tau$-tilting $A$-module $T$ becomes a $1$-tilting module over the quotient algebra $A/{{\rm Ann}(T)}$. Here, ${\rm Ann}(T)$ is the annihilator of $T$, which is known to be a nilpotent ideal of $A$. However, the converse is not true in general. We are interested in the following basic question: to what extent, a $1$-tilting module over a quotient algebra of $A$ becomes a $\tau$-tilting $A$-module?

The first result answers the question above, using a new tensor-vanishing condition; see Theorem~\ref{thm:tau-tilting}. 

\vskip 5pt

\noindent {\bf Theorem~I}.\;  \emph{Let $T$ be an $A$-module with ${\rm Ann}(T)$ nilpotent.  Then the $A$-module $T$ is  $\tau$-tilting if and only if  the corresponding $A/{{\rm Ann}(T)}$-module $T$ is $1$-tilting satisfying ${\rm Ann}(T)\otimes_A T=0$.}

\vskip 5pt
In contrast to the the trivial fact ${\rm Ann}(T)T=0$, the condition ${\rm Ann}(T)\otimes_A T=0$ above is nontrivial and  necessary.

By \cite{ASS, AIR}, a $1$-tilting module is precisely a faithful $\tau$-tilting module.  This might be viewed as a characterization of a $1$-tilting module in terms of a $\tau$-tilting module. The second result is another such characterization, using a new ${\rm Tor}^1$-vanishing condition; see Theorem~\ref{thm:tilting}.

\vskip 5pt

\noindent {\bf Theorem~II}. \; \emph{Let $T$ be an $A$-module.  Then $T$ is $1$-tilting if and only if it is $\tau$-tilting satisfying ${\rm Tor}^A_1({\rm Ann}(T), T)=0$.}
\vskip 5pt

The ``only if" part of Theorem~II is trivial, since the annihilator of  any  $1$-tilting module is  zero. We emphasize that the proofs of Theorems~I and II are quite elementary. 

Recall that an $A$-module $T$ is self-orthogonal if ${\rm Ext}^i_A(T, T)=0$ for any $i\geq 1$. It is clear that any $1$-tilting module is a self-orthogonal $\tau$-tilting module. The  following natural question is asked in \cite{Zhang}: is any self-orthogonal $\tau$-tilting module  $1$-tilting? Since its positive answer is implied by a conjecture in \cite{Eno}, we propose to call it the {\em Self-orthogonal $\tau$-tilting Conjecture},  and (S$\tau$C) for short.

We mention that (S$\tau$C) holds for algebras with finite global dimension. Indeed, it is shown in \cite{Zhang} that any self-orthogonal $\tau$-tilting module with finite projective dimension is $1$-tilting.  More generally, by \cite{Mar}, it holds for syzygy-finite Gorenstein algebras, since any self-orthogonal module over such algebras has finite projective dimension; compare \cite{Chang, LyW}.  For more confirmed cases, we refer to Proposition~\ref{prop:conj}. 

We use the delooping level in \cite{Gel} to investigate (S$\tau$C). Recall that the global delooping level of an algebra is the supremum of the delooping levels of all its left modules.  Examples of algebras  with finite global delooping level include Gorenstein algebras and syzygy-finite algebras. 

We mention that delooping levels play a role in the finitistic dimension conjecture. The following result indicates that they might be useful to study (S$\tau$C); see Theorem~\ref{thm:dell-B}.
\vskip 5pt

\noindent {\bf Theorem~III}.\;  \emph{Let $T$ be a self-orthogonal $\tau$-tilting $A$-module. Set $B={\rm End}_A(T)^{\rm op}$. Then $T$ is $1$-tilting provided that $B$ has finite global delooping level.}

\vskip 5pt

 The paper is structured as follows. In Section~2, we recall basic facts in tilting theory. We compare $1$-tilting modules and $\tau$-tilting modules in Section~3. We study the delooping level in an exact category in Section~4. In the final section, we discuss homological conjectures on self-orthogonal modules, and prove  Theorem~III, whose proof relies on Theorem~\ref{thm:tilting}.

\section{Preliminaries}

In this section, we recall from \cite{ASS, AIR} basic facts on $1$-tilting modules and $\tau$-tilting modules. 

 Let $A$ be an artin algebra. Denote by $A\mbox{-mod}$ the abelian category of finitely generated left $A$-modules.  For each $A$-module $M$, we denote by $|M|$ the number of isomorphism classes of indecomposable direct summands of $M$. For example, $|A|$ is the number of isomorphism classes of indecomposable projective $A$-modules, which equals the number of isomorphism classes of simple $A$-modules.

 For a projective $A$-module $P$, its \emph{trace ideal} \cite[\S 8]{AF} is defined to be
 $${\rm tr}(P)=\sum_{f\in {\rm Hom}_A(P, A)}\; {\rm Im}(f).$$
 If $P\simeq A e$ for some idempotent $e$, then ${\rm tr}(P)=AeA$. We observe 
 \begin{align}\label{equ:tr}
 {\rm tr} (P) P=P.
 \end{align}
 Recall that a two-sided ideal $I$ of $A$ is \emph{idempotent} if $I=I^2$. It is well known that an ideal $I$ is idempotent if and only if there is a projective module $P$ satisfying $I={\rm tr}(P)$.

Let $I$ be an ideal of $A$. Denote by $I_0\subseteq I$ its \emph{stable part}, that is, the largest idempotent ideal contained in $I$. We mention that $I^N=I_0$ for any sufficiently large $N$.  Up to isomorphism, there is a unique basic projective $A$-module $P_0$ with $I_0={\rm tr}(P_0)$. Set 
$${\rm st}(I)=|P_0|,$$
which might be called the \emph{stable index} of $I$.

\begin{lem}\label{lem:card}
Let $I$ be an ideal of $A$. Then we have
$$|A|=|A/I|+{\rm st}(I)=|A/I_0|+{\rm st}(I).$$
Moreover, $|A|=|A/I|$ if and only if $I$ is nilpotent. 
\end{lem}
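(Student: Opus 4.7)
The plan is to prove all three statements by counting simple $A$-modules. For any ideal $J$ of $A$, the simple $A/J$-modules are exactly the simple $A$-modules annihilated by $J$, so $|A/J|$ counts those simples. Thus the first equality $|A|=|A/I|+{\rm st}(I)$ reduces to showing that the number of simple $A$-modules $S$ with $IS\neq 0$ equals ${\rm st}(I)$.

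The main step uses a dichotomy: for simple $S$, the submodule $IS$ is either $0$ or $S$. If $IS=S$, iterating gives $I^N S=S$, hence $I_0 S=S$ (using $I^N=I_0$ for large $N$). Writing $I_0=AeA$ with $e$ basic and $Ae=P_0$, this yields $eS\neq 0$; the converse is immediate from $e\in I_0\subseteq I$. So $|A|-|A/I|$ is the number of simples $S$ with $eS\neq 0$. By the standard Schur-functor bijection $S\mapsto eS$, this count equals the number of simples of $eAe$, i.e.\ $|eAe|$. Since $e$ is basic, the primitive summands of $e$ remain pairwise non-conjugate in $eAe$, giving $|eAe|=|Ae|=|P_0|={\rm st}(I)$, which completes the first equality.

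The second equality $|A/I|=|A/I_0|$ then drops out by reapplying the same formula to $I_0$: since $I_0$ is already idempotent, $(I_0)_0=I_0$ and the associated basic projective is again $P_0$, so ${\rm st}(I_0)={\rm st}(I)$. For the final clause, $|A|=|A/I|$ is equivalent to ${\rm st}(I)=0$, hence to $P_0=0$, hence to $I_0=0$; by the remark preceding the lemma ($I^N=I_0$ for large $N$), this holds iff $I$ is nilpotent. The only step requiring honest work is the identification ${\rm st}(I)=|eAe|$ via the Schur functor; I do not anticipate any real obstacle, and the only subtlety is to avoid assuming $I\subseteq{\rm rad}(A)$, which is why the dichotomy $IS\in\{0,S\}$ is used in place of any radical-based argument.
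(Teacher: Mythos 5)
Your proof is correct, but it runs parallel to the paper's rather than coinciding with it: the paper counts indecomposable \emph{projectives}, using the identity ${\rm tr}(P)P=P$ to show that $(A/I)\otimes_A P=0$ iff $IP=P$ iff ${\rm tr}(P)\subseteq I$ iff ${\rm tr}(P)\subseteq I_0$ iff $P\in{\rm add}(P_0)$, whereas you count \emph{simples}, using the dichotomy $IS\in\{0,S\}$, the reduction $IS=S\Leftrightarrow eS\neq 0$, and the Schur-functor bijection between simples not killed by $e$ and simples of $eAe$. The two counts match under $P\leftrightarrow{\rm top}(P)$, so both are legitimate; the paper's version is shorter because the trace identity does all the work in one line and never needs to realize $I_0$ as $AeA$ or invoke the idempotent subalgebra $eAe$, while yours is self-contained at the level of simple modules and, as you note, cleanly sidesteps any temptation to assume $I\subseteq{\rm rad}(A)$. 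The one step you flag as needing "honest work," namely ${\rm st}(I)=|eAe|$, does go through: ${\rm Hom}_{eAe}(eAe_i,eAe_j)\simeq e_iAe_j\simeq{\rm Hom}_A(Ae_i,Ae_j)$ for the primitive orthogonal summands $e_i$ of $e$, so the $eAe_i$ are pairwise non-isomorphic exactly when the $Ae_i$ are, giving $|eAe|=|Ae|=|P_0|$. Your treatment of the second equality and of the nilpotency clause agrees with the paper's.
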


\begin{proof}
   For the two equalities,  we apply the following facts: for a projective $A$-module $P$, $(A/I)\otimes_A P=0$ if and only if $IP=P$,  which is equivalent to ${\rm tr}(P)\subseteq I$ by (\ref{equ:tr}) ;  moreover, ${\rm st}(I)={\rm st}(I_0)$. For the last statement, we observe that ${\rm st}(I_0)=0$ if and only if $I_0=0$, which is equivalent to the condition that the ideal $I$ is nilpotent. 
\end{proof}

Let $T$ be an $A$-module. Denote by ${\rm add}(T)$ the full subcategory of $A\mbox{-mod}$ formed by direct summands of finite direct sums of $T$, and  by ${\rm fac}(T)$ the full subcategory formed by factor modules of finite direct sums of $T$.

An $A$-module $T$ is called \emph{rigid} if ${\rm Ext}^1_A(T, T)=0$, and called \emph{self-orthogonal} if ${\rm Ext}_A^i(T, T)=0$ for any $i\geq 1$. Recall that an $A$-module $T$ is called \emph{$1$-tilting} if it is rigid and satisfying ${\rm pd}_A(T)\leq 1$ and $|T|=|A|$. Slightly more generally, an $A$-module $T$ is called \emph{partial $1$-tilting}, if it is rigid and satisfies ${\rm pd}_A(T)\leq 1$. Clearly, any partial $1$-tilting module is self-orthogonal. 

Recall that an \emph{exact category} in the sense of \cite{Qui73} is an additive category with a chosen classes of short exact sequences, called conflations, which satisfy certain axioms; see \cite[Appendix~A]{Kel90}. For example, an extension-closed full subcategory of an abelian category inherits the short exact sequences and becomes an exact category.   

The following fact is standard.

\begin{lem}\label{lem:par-tilting}
    Let $T$ be a partial $1$-tilting $A$-module. Then  the subcategory ${\rm fac}(T)$ of $A\mbox{-}{\rm mod}$ is closed under extensions, and becomes an exact category. Moreover, $T$ is projective in the exact category ${\rm fac}(T)$. \hfill $\square$ 
\end{lem}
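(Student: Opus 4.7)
The plan is to prove this by first establishing a single Ext-vanishing fact, then deducing both assertions from it. The key claim I would prove first is that ${\rm Ext}^1_A(T, X) = 0$ for every $X \in {\rm fac}(T)$. Given such an $X$, pick a surjection $T^n \twoheadrightarrow X$ with kernel $K$ and apply ${\rm Hom}_A(T, -)$ to get an exact sequence
\[
{\rm Ext}^1_A(T, T^n) \longrightarrow {\rm Ext}^1_A(T, X) \longrightarrow {\rm Ext}^2_A(T, K).
\]
The left term vanishes because $T$ is rigid, and the right term vanishes because ${\rm pd}_A(T) \leq 1$, so the middle term vanishes.

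Next, I would prove that ${\rm fac}(T)$ is closed under extensions. Given a short exact sequence $0 \to X \to Y \to Z \to 0$ in $A\mbox{-}{\rm mod}$ with $X, Z \in {\rm fac}(T)$, I choose a surjection $T^m \twoheadrightarrow Z$ and form the pullback
\[
\begin{array}{ccccccccc}
0 & \to & X & \to & P & \to & T^m & \to & 0 \\
 & & \| & & \downarrow & & \downarrow & & \\
0 & \to & X & \to & Y & \to & Z & \to & 0.
\end{array}
\]
The top row splits by the claim applied to $X$, hence $P \cong X \oplus T^m$. Since the right square is a pullback along an epimorphism, $P \to Y$ is also an epimorphism. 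Composing with a chosen surjection $T^k \twoheadrightarrow X$ gives $T^{k+m} \twoheadrightarrow Y$, so $Y \in {\rm fac}(T)$. Once closedness under extensions is established, the fact that ${\rm fac}(T)$ inherits the structure of an exact category is the standard construction recalled in the paragraph preceding the lemma, so nothing further is required there.

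Finally, to verify that $T$ is projective in ${\rm fac}(T)$, it suffices to show that for every conflation $0 \to X \to Y \to Z \to 0$ in ${\rm fac}(T)$ the map ${\rm Hom}_A(T, Y) \to {\rm Hom}_A(T, Z)$ is surjective. The obstruction to surjectivity is precisely ${\rm Ext}^1_A(T, X)$, which vanishes by the initial claim since $X \in {\rm fac}(T)$.

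The only mildly subtle point is the initial Ext-vanishing, where one has to jump from ${\rm Ext}^1$ into ${\rm Ext}^2$ of the syzygy; everything else is routine homological algebra together with the pullback trick. This is where both conditions ``rigid'' and ``${\rm pd}_A(T) \leq 1$'' are simultaneously needed, so that step is the conceptual core of the argument, even though it is short.
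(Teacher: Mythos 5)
Your proof is correct, and it is the standard argument that the paper omits (the lemma is stated as a known fact with no written proof). The key step, deducing ${\rm Ext}^1_A(T,{\rm fac}(T))=0$ from rigidity plus ${\rm pd}_A(T)\leq 1$ via the shift into ${\rm Ext}^2$ of the syzygy, is exactly the right conceptual core, and the pullback argument for extension-closure and the identification of the obstruction to projectivity are both carried out correctly.
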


The following well-known  result can be found in \cite[VI.2.5~Theorem~(c) and (d)]{ASS}. We denote by $D$ the Matlis duality. 

\begin{lem}\label{lem:1-tilt}
Let $T$ be a  $1$-tilting $A$-module. Then the exact category  ${\rm fac}(T)$ has enough projective objects and enough injective objects. Moreover, projective objects are precisely modules in ${\rm add}(T)$, and injective objects are precisely injective $A$-modules.
\end{lem}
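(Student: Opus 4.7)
The plan is to reduce the lemma to the identity
\[
{\rm fac}(T)\;=\;\{M\in A\mbox{-mod}\mid {\rm Ext}^1_A(T,M)=0\},
\]
whose proof relies on the classical tilting sequence $0\to A\to T^0\to T^1\to 0$ with $T^0,T^1\in{\rm add}(T)$. The latter is a Bongartz-type completion available because $T$ is rigid, satisfies ${\rm pd}_A(T)\le 1$, and has $|T|=|A|$.

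For the inclusion ``$\subseteq$'', I apply ${\rm Hom}_A(T,-)$ to $0\to K\to T^n\to M\to 0$ for any $M\in{\rm fac}(T)$, and combine rigidity (${\rm Ext}^1_A(T,T^n)=0$) with ${\rm pd}_A(T)\le 1$ (${\rm Ext}^2_A(T,K)=0$) to conclude ${\rm Ext}^1_A(T,M)=0$. For ``$\supseteq$'', I apply ${\rm Hom}_A(-,M)$ to the tilting sequence: since ${\rm Ext}^1_A(T^i,M)=0$ for $i=0,1$, the map ${\rm Hom}_A(T^0,M)\to{\rm Hom}_A(A,M)=M$ is surjective, and identifying it with evaluation at the image of $1_A\in T^0$ shows every element of $M$ lies in the image of some $T^0\to M$; taking a finite generating set of $M$ then produces a surjection $(T^0)^k\to M$, so $M\in{\rm fac}(T)$.

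Granted this identification, I get enough projectives as follows: pick $B$-module generators $f_1,\dots,f_n$ of ${\rm Hom}_A(T,M)$ with $B={\rm End}_A(T)^{\rm op}$, assemble them into a surjection $p\colon T^n\to M$ for which $p_\ast\colon{\rm Hom}_A(T,T^n)\to{\rm Hom}_A(T,M)$ is surjective, and deduce from the long exact sequence that ${\rm Ext}^1_A(T,\ker p)=0$, so $\ker p\in{\rm fac}(T)$. By Lemma~\ref{lem:par-tilting}, $T^n$ is projective in ${\rm fac}(T)$, and for any projective object $Q$ of ${\rm fac}(T)$ the corresponding conflation $0\to\ker p\to T^n\to Q\to 0$ splits, forcing $Q\in{\rm add}(T)$.

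On the injective side, any $A$-injective $I$ satisfies ${\rm Ext}^1_A(T,I)=0$, hence $I\in{\rm fac}(T)$, and any conflation in ${\rm fac}(T)$ starting at $I$ already splits in $A\mbox{-mod}$, so $I$ is injective in ${\rm fac}(T)$. Enough injectives is obtained by embedding $M\in{\rm fac}(T)$ into an $A$-injective $I$: the cokernel $N$ satisfies ${\rm Ext}^1_A(T,N)=0$ because ${\rm Ext}^1_A(T,I)=0$ and ${\rm Ext}^2_A(T,M)=0$, so $N\in{\rm fac}(T)$. Finally, any $J$ injective in ${\rm fac}(T)$ is a summand of such an envelope $I$, hence is $A$-injective. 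The crux of the argument is really the identity ${\rm fac}(T)=\{M\mid{\rm Ext}^1_A(T,M)=0\}$, which depends on the tilting sequence; once this is established, the remaining steps are routine long-exact-sequence bookkeeping.
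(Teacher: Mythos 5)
Your proposal is correct and follows essentially the same route as the paper's (very terse) proof: the surjection $p\colon T^n\to M$ built from $B$-module generators of ${\rm Hom}_A(T,M)$ is exactly the right ${\rm add}(T)$-approximation the paper invokes, and your use of the tilting coresolution $0\to A\to T^0\to T^1\to 0$ and of injective $A$-modules lying in ${\rm fac}(T)$ matches the classical facts from \cite[VI.2]{ASS} that the paper cites. You have merely filled in the details (the identification ${\rm fac}(T)=\{M\mid{\rm Ext}^1_A(T,M)=0\}$ and the splitting arguments) that the paper leaves to the reference, and these are all sound.
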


\begin{proof}
The proof uses the following fact: for each $M\in {\rm fac}(T)$, take a right ${\rm add}(T)$-approximation $f\colon T^n\rightarrow M$ of $M$. Then $f$ is surjective, whose kernel lies in ${\rm fac}(T)$. For the last statement, we recall that $D(A)$ belongs to ${\rm fac}(T)$; see   \cite[VI.2.2~Lemma(d)]{ASS}. 
\end{proof}

An $A$-module $T$ is called \emph{$\tau$-rigid} if ${\rm Hom}_A(T, \tau(T))=0$, where $\tau$ denotes the Auslander-Reiten translation. By \cite[Corollary~5.9]{AS}, this is equivalent to the condition ${\rm Ext}_A^1(T, {\rm fac}(T))=0$. Moreover, in this case ${\rm fac}(T)$ is closed under extensions. We observe that any $\tau$-rigid module is rigid, and  any partial $1$-tilting module is $\tau$-rigid; see Lemma~\ref{lem:par-tilting}.  

Following \cite[1.2]{AIR}, a $\tau$-rigid $A$-module $T$ is called \emph{$\tau$-tilting} if it satisfies $|T|=|A|$. Slightly more generally, an $A$-module $T$ is called \emph{support $\tau$-tilting} if there is an idempotent ideal $I$ such that $IT=0$ and that $T$ is a $\tau$-tilting $A/I$-module. We mention that the ideal $I$ is unique, which equals the stable part of the annihilator ${\rm Ann}(T)$.

The following result is due to \cite[VIII.5.1~Lemma]{ASS}; compare \cite[Proposition~1.4]{AIR}.

\begin{lem}\label{lem:faithful}
    Let $T$ be an $A$-module. Then the following statements hold.
    \begin{enumerate}
        \item If $T$ is faithful and $\tau$-rigid, then it is partial $1$-tilting.
        \item The $A$-module $T$ is faithful and $\tau$-tilting if and only if it is $1$-tilitng. 
    \end{enumerate}
\end{lem}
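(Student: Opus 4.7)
The strategy is to prove (1) in full, from which (2) follows almost immediately. For (1), rigidity $\operatorname{Ext}^1_A(T,T)=0$ is automatic from $\tau$-rigidity via the Auslander-Reiten formula, so the substance is the bound $\operatorname{pd}_A(T)\leq 1$. Faithfulness provides the key input: for a finite generating set $t_1,\dots,t_n$ of $T$, the $A$-linear map $a\mapsto(at_1,\dots,at_n)$ has kernel $\operatorname{Ann}(T)=0$, producing a short exact sequence
$$0\longrightarrow A\longrightarrow T^n\longrightarrow C\longrightarrow 0$$
with $C\in\operatorname{fac}(T)$.

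To obtain $\operatorname{pd}_A(T)\leq 1$, I would combine this embedding with a minimal projective presentation $P_1\xrightarrow{d}P_0\to T\to 0$ and show that the second syzygy $\ker(d)$ vanishes. Since $\tau T$ is the Auslander-Reiten translate, constructed from the transpose of $d$, the $\tau$-rigidity relation $\operatorname{Hom}_A(T,\tau T)=0$ constrains $\ker(d)$ directly. Applying the long exact sequence of $\operatorname{Hom}_A(T,-)$ to $0\to A\to T^n\to C\to 0$, and using rigidity ($\operatorname{Ext}^1_A(T,T^n)=0$) together with the characterization of $\tau$-rigidity as $\operatorname{Ext}^1_A(T,\operatorname{fac}(T))=0$ (in particular $\operatorname{Ext}^1_A(T,C)=0$), the condition $\operatorname{Hom}_A(T,\tau T)=0$ translates into the vanishing of $\Omega^2 T$.

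For (2), the ``only if'' direction is immediate from (1): a faithful $\tau$-tilting module is partial $1$-tilting by (1), and has $|T|=|A|$ by the $\tau$-tilting axiom, hence is $1$-tilting. For the ``if'' direction, any $1$-tilting module is partial $1$-tilting with $|T|=|A|$, so by Lemma~\ref{lem:par-tilting} it is $\tau$-rigid and therefore $\tau$-tilting. It is also faithful: any $1$-tilting module admits a standard resolution $0\to A\to T^0\to T^1\to 0$ with $T^0,T^1\in\operatorname{add}(T)$, and the embedding $A\hookrightarrow T^0$ forces $\operatorname{Ann}(T)\subseteq\operatorname{Ann}(T^0)\subseteq\operatorname{Ann}(A)=0$.

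The main obstacle is the bound $\operatorname{pd}_A(T)\leq 1$ in part~(1): neither faithfulness nor $\tau$-rigidity alone suffices, and their combination must be mediated by the explicit transpose description of $\tau T$, tying the faithful embedding $A\hookrightarrow T^n$ to the structure of the minimal projective presentation of $T$. This is where the technical weight of the lemma lies; all the other steps are essentially bookkeeping from the definitions and Lemma~\ref{lem:par-tilting}.
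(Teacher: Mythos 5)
The paper does not reprove this lemma; it simply cites \cite[VIII.5.1~Lemma]{ASS} and \cite[Proposition~1.4]{AIR}. Your reconstruction of part~(2) from part~(1) and Lemma~\ref{lem:par-tilting} is correct and matches the standard bookkeeping. The problem is in part~(1), and it is the crucial step.

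You correctly identify that the substance is the bound $\operatorname{pd}_A T\leq 1$, i.e.\ $\Omega^2 T=0$ for the minimal presentation $P_1\xrightarrow{d}P_0\to T\to 0$. But the way you propose to get there does not close. Applying $\operatorname{Hom}_A(T,-)$ to $0\to A\to T^n\to C\to 0$ and using $\operatorname{Ext}^1_A(T,C)=0$ (from $\tau$-rigidity) only yields an \emph{injection} $\operatorname{Ext}^2_A(T,A)\hookrightarrow\operatorname{Ext}^2_A(T,T^n)$, and nothing in the hypotheses gives $\operatorname{Ext}^2_A(T,T)=0$. Moreover, even the conclusion $\operatorname{Ext}^2_A(T,A)=0$ would not force $\Omega^2T=0$: over any non-semisimple self-injective algebra, $\operatorname{Ext}^{\geq 1}_A(M,A)=0$ for every module $M$, yet projective dimensions are typically infinite. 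So testing $\operatorname{Ext}^2$ against $A$ is simply the wrong test object; you must test against the injective cogenerator $DA$, not the projective generator. The sentence ``the condition $\operatorname{Hom}_A(T,\tau T)=0$ translates into the vanishing of $\Omega^2T$'' asserts exactly the point that needs a proof and is left unjustified.

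The missing ingredient is the criterion (ASS~IV.2.7(b)): $\operatorname{pd}_A T\leq 1$ if and only if $\operatorname{Hom}_A(DA,\tau T)=0$; equivalently, from the Nakayama-functor exact sequence $0\to\tau T\to\nu P_1\to\nu P_0$ one gets $\operatorname{Hom}_A(DA,\tau T)\cong\ker(d)=\Omega^2T$. With this in hand, the argument is short and uses your two hypotheses in the right way: faithfulness of $T$ is equivalent (by duality, since $\operatorname{Ann}(T)=\operatorname{Ann}(DT)$) to $DA\in\operatorname{fac}(T)$, and $\tau$-rigidity immediately gives $\operatorname{Hom}_A(X,\tau T)=0$ for every $X\in\operatorname{fac}(T)$ (any map $X\to\tau T$ lifts along a surjection $T^m\twoheadrightarrow X$ to a map $T^m\to\tau T$, which vanishes). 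Hence $\operatorname{Hom}_A(DA,\tau T)=0$ and $\operatorname{pd}_A T\leq 1$. Notice this route never applies $\operatorname{Hom}_A(T,-)$ to the faithfulness sequence at all; the functor you want to apply is $\operatorname{Hom}_A(-,\tau T)$, and the cogenerator $DA$ rather than $A$ must enter the picture.
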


The following results are  well known.

\begin{prop}\label{prop:numerical}
    Let $T$ be an $A$-module with $I={\rm Ann}(T)$. Then the following statements hold.
        \begin{enumerate}
        \item If $T$ is  $\tau$-rigid, we have $|T|\leq |A/I|$.
        \item The $A$-module $T$ is support $\tau$-tilting if and only if it is $\tau$-rigid satisfying $|T|=|A/I|$.
        \item The $A$-module $T$ is $\tau$-tilting if and only if it is support $\tau$-tilting and the ideal $I$ is nilpotent. 
    \end{enumerate}
\end{prop}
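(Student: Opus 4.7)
The plan is to prove the three statements in sequence, using Lemma~\ref{lem:card} for the cardinality identities and Lemma~\ref{lem:faithful} to upgrade faithful $\tau$-rigid modules to partial $1$-tilting ones. Throughout I set $\bar A=A/I$; since $IT=0$, the module $T$ descends to $\bar A$, where it is by construction faithful.

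For (1), the first step is to verify that $T$ remains $\tau$-rigid when regarded as an $\bar A$-module. This rests on the characterization ${\rm Ext}^1_A(T,{\rm fac}(T))=0$ recalled before the statement: every object of ${\rm fac}(T)$ is annihilated by $I$ and hence is naturally an $\bar A$-module, and a standard change-of-rings argument (starting from the fact that a projective presentation of $T$ over $A$ descends, under $\bar A\otimes_A-$, to a projective presentation of $T$ over $\bar A$) matches ${\rm Ext}^1_A(T,-)$ and ${\rm Ext}^1_{\bar A}(T,-)$ on such modules. Lemma~\ref{lem:faithful}(1) then makes $T$ a partial $1$-tilting $\bar A$-module, and the standard bound $|T|\leq|\bar A|=|A/I|$ for partial $1$-tilting modules (via Bongartz-completing $T$ to a $1$-tilting $\bar A$-module) concludes.

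For (2), the definition of support $\tau$-tilting asks for an idempotent ideal $J$ with $JT=0$ such that $T$ is a $\tau$-tilting $A/J$-module; uniqueness forces $J=I_0$, the stable part of $I$. Lemma~\ref{lem:card} gives $|A/I_0|=|A/I|$, so the numerical equality $|T|=|A/I_0|$ in the definition of $\tau$-tilting over $A/I_0$ becomes $|T|=|A/I|$. The $\tau$-rigidity is transferred between $A$ and $A/I_0$ by the same change-of-rings argument as in (1), now applied to the idempotent ideal $I_0$; both directions of the claimed equivalence then follow.

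Finally, (3) combines (2) with the last clause of Lemma~\ref{lem:card}: a $\tau$-tilting module satisfies $|T|=|A|$, which by (2) equals $|A/I|$, and Lemma~\ref{lem:card} turns this into nilpotence of $I$; conversely, if $I$ is nilpotent then $I_0=0$, so support $\tau$-tilting over $A$ coincides with $\tau$-tilting over $A/I_0=A$. The main obstacle I foresee is the change-of-rings comparison for ${\rm Ext}^1$ that underpins the descent of $\tau$-rigidity from $A$ to a quotient by an ideal contained in ${\rm Ann}(T)$; once this is in place, the rest is elementary arithmetic with Lemma~\ref{lem:card} and the definitions.
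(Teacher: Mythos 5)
Your decomposition is the same as the paper's: pass to $\bar A=A/I$ (where $T$ is faithful), invoke Lemma~\ref{lem:faithful}, and do the arithmetic with Lemma~\ref{lem:card} and the stable part $I_0$. That part is fine, and the conclusions in (2) and (3) follow exactly as you say once the $\tau$-rigidity transfer is in place.

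The one step that is not right as stated is the change-of-rings comparison. Applying $\bar A\otimes_A-$ to a projective presentation of $T$ over $A$ gives a projective presentation of $T$ over $\bar A$, and $\operatorname{Hom}_A(P_i,M)=\operatorname{Hom}_{\bar A}(\bar P_i,M)$ for $M$ an $\bar A$-module, but this does not make $\operatorname{Ext}^1_A(T,M)$ and $\operatorname{Ext}^1_{\bar A}(T,M)$ isomorphic: the complex $\bar A\otimes_A P_\bullet$ is generally not exact beyond degree one, so it no longer computes $\operatorname{Ext}_{\bar A}$. What is true in general is only the inclusion $\operatorname{Ext}^1_{\bar A}(T,M)\hookrightarrow\operatorname{Ext}^1_A(T,M)$ (every $\bar A$-extension is an $A$-extension, and splitting is the same on both sides). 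That inclusion suffices for everything you do in the descent direction — part (1), and the ``if'' direction of (2), where you pass from $\tau$-rigidity over $A$ to $\tau$-rigidity over $A/I$ or $A/I_0$. For the ``only if'' direction of (2) you need ascent from $A/I_0$ to $A$, and there the correct justification uses the idempotence of $I_0$ rather than the projective-presentation argument: if $X,Y$ are $A/I_0$-modules and $0\to Y\to E\to X\to 0$ is any $A$-extension, then $I_0E\subseteq Y$ and hence $I_0E=I_0^2E\subseteq I_0Y=0$, so $E$ is an $A/I_0$-module and $\operatorname{Ext}^1_A(X,Y)=\operatorname{Ext}^1_{A/I_0}(X,Y)$ on the nose. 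Since $\operatorname{fac}(T)$ is the same subcategory over $A$ and over $A/I_0$, the Auslander--Smal\o{} criterion $\operatorname{Ext}^1(T,\operatorname{fac}(T))=0$ then transfers both ways. With this repair, your proof is complete and tracks the paper's argument closely (the paper handles (1) by citing \cite[Proposition~1.3]{AIR} rather than rerunning Bongartz completion over $\bar A$, but that is a cosmetic difference).
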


\begin{proof}
For (1), we refer to \cite[Proposition~1.3]{AIR}; compare \cite[VIII.5.3~Lemma]{ASS}. For (2), we consider the stable part $I_0$ of $I$. Then the $A$-module $T$ is support $\tau$-tilting if and only if it is a $\tau$-tilting $A/I_0$-module. In particular, we have $|T|=|A/I_0|=|A/I|$; see Lemma~\ref{lem:card}. For (3), we just observe that a support $\tau$-tilting $A$-module is $\tau$-tilting if and only if $I_0=0$, which is equivalent to the nilptency of $I$.  
\end{proof}

\section{New comparison results}

In this section, we compare $\tau$-tilting modules and $1$-tilting modules. Theorems~\ref{thm:tau-tilting} and \ref{thm:tilting} are new characterizations of $\tau$-tiling modules and $1$-tilting modules, respectively. 

We first characterize $\tau$-rigid modules via certain partial $1$-tilting modules over quotient algebras. 

\begin{prop}\label{prop:tau-rigid}
    Let $T$ be an $A$-module. Set $I={\rm Ann}(T)$ and $\bar{A}=A/I$. Then the $A$-module $T$ is $\tau$-rigid if and only if the corresponding $\bar{A}$-module $T$ is partial $1$-tilting satisfying $I\otimes_A T=0$.
\end{prop}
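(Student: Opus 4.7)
The plan is to analyze a minimal projective $A$-presentation $P_1 \xrightarrow{f} P_0 \to T \to 0$ and its reduction $\bar P_1 \xrightarrow{\bar f} \bar P_0 \to T \to 0$ modulo $I$, using the identification ${\rm Tor}^A_1(\bar A, T) \cong I \otimes_A T$ (from tensoring $0 \to I \to A \to \bar A \to 0$ with $T$ and using $IT = 0$). The Auslander-Reiten criterion in play is that $T$ is $\tau$-rigid over $A$ iff ${\rm Hom}_A(f, T)$ is surjective, equivalently ${\rm Ext}^1_A(T, X) = 0$ for every $X \in {\rm fac}(T)$. An elementary key fact: for $\bar A$-modules $T, X$, there is a natural injection ${\rm Ext}^1_{\bar A}(T, X) \hookrightarrow {\rm Ext}^1_A(T, X)$, since any $A$-split extension of $\bar A$-modules is automatically $\bar A$-split.

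For the forward direction, assume $T$ is $\tau$-rigid over $A$. Applying the injection above to $X \in {\rm fac}(T)$ shows $T$ is also $\tau$-rigid over $\bar A$; being faithful there, Lemma~\ref{lem:faithful}(1) makes it partial $1$-tilting over $\bar A$, hence ${\rm pd}_{\bar A}(T) \le 1$. A standard radical computation (using $\ker(P_0 \to T) \subseteq {\rm rad}(A) P_0$) shows $\bar P_0 \to T$ is a projective cover of $T$ over $\bar A$, so $\Omega_{\bar A}(T) = {\rm Im}(\bar f)$ is projective over $\bar A$; the short exact sequence $0 \to K \to \bar P_1 \twoheadrightarrow \Omega_{\bar A}(T) \to 0$ with $K := \ker(\bar f)$ then splits, exhibiting $K$ as a projective summand of $\bar P_1$. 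The Auslander-Reiten criterion, via the identification ${\rm Hom}_A(P_i, T) = {\rm Hom}_{\bar A}(\bar P_i, T)$, says every $\bar A$-map $\bar P_1 \to T$ factors through $\bar f$ and in particular vanishes on $K$; since $K$ is a summand this is equivalent to ${\rm Hom}_{\bar A}(K, T) = 0$, and faithfulness of $T$ combined with $K$ projective then forces $K = 0$. Hence $\bar f$ is injective; extending to an $A$-projective resolution of $T$ and reducing modulo $I$ gives ${\rm Tor}^A_1(\bar A, T) = 0$, i.e., $I \otimes_A T = 0$.

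For the backward direction, assume $T$ is partial $1$-tilting over $\bar A$ and $I \otimes_A T = 0$. The same radical computation shows $\bar P_0 \to T$ and $\bar P_1 \twoheadrightarrow \bar A \otimes_A {\rm Im}(f)$ are both projective covers over $\bar A$. Tensoring $0 \to {\rm Im}(f) \to P_0 \to T \to 0$ with $\bar A$ produces
\[
0 \to {\rm Tor}^A_1(\bar A, T) \to \bar A \otimes_A {\rm Im}(f) \to \Omega_{\bar A}(T) \to 0,
\]
whose first term vanishes by hypothesis, so $\bar P_1 \twoheadrightarrow \Omega_{\bar A}(T)$ is a projective cover. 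Since ${\rm pd}_{\bar A}(T) \le 1$ forces $\Omega_{\bar A}(T)$ to be projective over $\bar A$, uniqueness of projective covers makes this cover an isomorphism: $\bar f$ is injective, and $0 \to \bar P_1 \xrightarrow{\bar f} \bar P_0 \to T \to 0$ is a projective $\bar A$-resolution. Applying ${\rm Hom}_{\bar A}(-, T)$ and invoking ${\rm Ext}^1_{\bar A}(T, T) = 0$ yields surjectivity of ${\rm Hom}_{\bar A}(\bar P_0, T) \to {\rm Hom}_{\bar A}(\bar P_1, T)$, which via the same identification is exactly the $\tau$-rigidity of $T$ over $A$.

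The main obstacle is the minimality bookkeeping required to align the reduced projective presentation with the intrinsic $\bar A$-projective resolution of $T$. In the forward direction the delicate step is converting ``${\rm Hom}_{\bar A}(\bar P_1, T) \to {\rm Hom}_{\bar A}(K, T)$ vanishes'' into ``$K = 0$'', which requires $K$ to be a direct summand of $\bar P_1$ (using projectivity of $\Omega_{\bar A}(T)$, hence ${\rm pd}_{\bar A}(T) \le 1$) together with the faithfulness of $T$ over $\bar A$; in the backward direction the hypothesis $I \otimes_A T = 0$ is what makes the comparison sequence above identify the reduced syzygy $\bar A \otimes_A {\rm Im}(f)$ with $\Omega_{\bar A}(T)$.
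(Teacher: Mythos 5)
Your argument is correct, and it follows a genuinely different route from the paper's. The paper never touches projective presentations: for the ``only if'' direction it uses that $D(\bar{A})\in{\rm fac}(T)$ for the faithful $\bar{A}$-module $T$, so ${\rm Ext}^1_A(T,D(\bar{A}))=0$, and then the duality ${\rm Ext}^1_A(T,D(\bar{A}))\simeq D\,{\rm Tor}_1^A(\bar{A},T)\simeq D(I\otimes_AT)$; for the ``if'' direction it reverses this to get ${\rm Ext}^1_A(T,E)=0$ for injective $\bar{A}$-modules $E$, embeds an arbitrary $X\in{\rm fac}(T)$ into such an $E$ with cokernel again in ${\rm fac}(T)$, and uses projectivity of $T$ in the exact category ${\rm fac}(T)$ to see that ${\rm Ext}^1_A(T,X)\to{\rm Ext}^1_A(T,E)$ is injective. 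You instead work with the minimal projective presentation $P_1\xrightarrow{f}P_0\to T\to 0$ and the Auslander--Smal{\o} surjectivity criterion, showing that $I\otimes_AT\simeq{\rm Tor}_1^A(\bar{A},T)$ is exactly the obstruction to the reduced presentation $\bar{P}_1\xrightarrow{\bar{f}}\bar{P}_0\to T\to 0$ being the minimal $\bar{A}$-projective resolution. Your approach is more computational but arguably more transparent: it exhibits concretely why the tensor-vanishing condition is needed (it kills the superfluous projective summand $K=\ker(\bar f)$, detected via faithfulness of $T$ over $\bar A$), at the cost of the projective-cover bookkeeping you acknowledge. The paper's duality argument is shorter and sets up the machinery (the exact category ${\rm fac}(T)$ and $D(\bar A)$ as a test object) reused later for the ${\rm Ext}^2$-criterion in Theorem~\ref{thm:tilting}. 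One small point worth spelling out in your write-up is the final step ``${\rm Hom}_{\bar A}(K,T)=0$ with $K$ projective and $T$ faithful forces $K=0$'': if $K\neq 0$ it has a summand $\bar{A}e$ with $e$ a primitive idempotent, and ${\rm Hom}_{\bar A}(\bar{A}e,T)\simeq eT=0$ would make the nonzero two-sided ideal $\bar{A}e\bar{A}$ annihilate $T$.
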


\begin{proof}
    For the ``only if" part, we assume that the $A$-module $T$ is $\tau$-rigid. Then the corresponding $\bar{A}$-module $T$  is also $\tau$-rigid, which is faithful. By Lemma~\ref{lem:faithful}(1), it is partial $1$-tilting. 

    Since the $\bar{A}$-module $T$  is faithful, we infer from \cite[VI.2.2~Lemma(d)]{ASS} that $D(\bar{A})$ belongs to ${\rm fac}(T)$. Therefore,  we have 
    $$0={\rm Ext}^1_A(T, D(\bar{A})) \simeq D{\rm Tor}_1^A(\bar{A}, T).$$
    Applying $-\otimes_A T$ to the canonical exact sequence
    $$0\longrightarrow I\longrightarrow A\longrightarrow \bar{A} \longrightarrow 0,$$
    we infer that ${\rm Tor}_1^A(\bar{A}, T)\simeq I\otimes_A T$. In summary, we conclude that $I\otimes_A T=0$.

    Conversely, we assume that the corresponding $\bar{A}$-module $T$ is partial $1$-tilting satisfying $I\otimes_A T=0$. The above proof yields ${\rm Ext}_A^1(T, D(\bar{A}))=0$. Consequently, ${\rm Ext}_A^1(T, E)=0$ for any injective $\bar{A}$-module $E$. 
    
    Take any object $X\in {\rm fac}(T)$. We form an exact sequence 
    $$0\longrightarrow X\stackrel{a}\longrightarrow E\stackrel{c}\longrightarrow Y\longrightarrow 0$$
    in $\bar{A}\mbox{-mod}$ with $E$ injective. We observe that $Y$ also belongs to ${\rm fac}(T)$, since $D(\bar{A})$ belongs to ${\rm fac}(T)$. By Lemma~\ref{lem:par-tilting}, $T$ is projective in ${\rm fac}(T)$.  In particular, ${\rm  Hom}_A(T, c)$ is surjective. Consequently, we infer that $a$ induces an injective map
    $${\rm Ext}_A^1(T, X)\longrightarrow {\rm Ext}_A^1(T, E).$$
    We conclude that ${\rm Ext}_A^1(T, X)=0$. By \cite[Corollary~5.9]{AS}, this implies that $T$ is $\tau$-rigid.
\end{proof}

\begin{rem}
    The vanishing condition $I\otimes_A T=0$ is necessary. Take any ideal $I$ of $A$. Then $A/I$ is certainly a partial $1$-tilting over $A/I$. However, the $A$-module  $A/I$ is $\tau$-rigid if and only if $I$ is idempotent. 
\end{rem}

\begin{cor}\label{cor:supp}
    Let $T$ be an $A$-module. Set $I={\rm Ann}(T)$ and $\bar{A}=A/I$. Then the $A$-module $T$ is support $\tau$-tilting if and only if the corresponding $\bar{A}$-module $T$ is $1$-tilting satisfying $I\otimes_A T=0$.
\end{cor}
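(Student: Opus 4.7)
The plan is to derive this corollary as an immediate upgrade of Proposition~\ref{prop:tau-rigid} by tracking the numerical condition $|T|=|A/I|$. This condition distinguishes support $\tau$-tilting modules among $\tau$-rigid ones (via Proposition~\ref{prop:numerical}(2)) and, simultaneously, distinguishes $1$-tilting $\bar{A}$-modules among partial $1$-tilting $\bar{A}$-modules (by the very definition of $1$-tilting). Since $|\bar{A}|=|A/I|$, these two cardinality conditions are literally the same equality, so they can be inserted on both sides of the equivalence in Proposition~\ref{prop:tau-rigid} without disturbing the rest of the statement.

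Concretely, for the ``only if'' direction I would first apply Proposition~\ref{prop:numerical}(2) to rewrite ``the $A$-module $T$ is support $\tau$-tilting'' as ``the $A$-module $T$ is $\tau$-rigid satisfying $|T|=|A/I|$''. Proposition~\ref{prop:tau-rigid} then converts the $\tau$-rigidity part into the statement that the $\bar{A}$-module $T$ is partial $1$-tilting together with $I\otimes_A T=0$. The leftover equality $|T|=|\bar{A}|$ promotes partial $1$-tilting to $1$-tilting, which gives the desired conclusion.

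For the ``if'' direction the same reasoning runs in reverse. Assuming that the $\bar{A}$-module $T$ is $1$-tilting with $I\otimes_A T=0$, I would first drop to the partial $1$-tilting property over $\bar{A}$ and apply the converse direction of Proposition~\ref{prop:tau-rigid} to obtain that $T$ is $\tau$-rigid over $A$. The $1$-tilting hypothesis supplies the numerical identity $|T|=|\bar{A}|=|A/I|$, and a second appeal to Proposition~\ref{prop:numerical}(2) concludes that the $A$-module $T$ is support $\tau$-tilting.

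I do not anticipate any real obstacle here; all the substantive work, in particular the equivalence of the tensor-vanishing condition with $\tau$-rigidity, is already carried out in Proposition~\ref{prop:tau-rigid}, while Lemma~\ref{lem:card} ensures that $|\bar{A}|=|A/I|$ so the two numerical conditions match. The role of the corollary is simply to repackage Proposition~\ref{prop:tau-rigid} in the language of support $\tau$-tilting modules by inserting one extra rank equality on each side of the equivalence.
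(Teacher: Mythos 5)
Your proposal is correct and is essentially the paper's own proof, which simply combines Proposition~\ref{prop:tau-rigid} with Proposition~\ref{prop:numerical}(2); the only extra content is your (correct) observation that the numerical condition $|T|=|A/I|=|\bar{A}|$ is exactly what upgrades ``partial $1$-tilting'' to ``$1$-tilting'' over $\bar{A}$. Note that $|\bar{A}|=|A/I|$ is a tautology since $\bar{A}=A/I$, so the appeal to Lemma~\ref{lem:card} is unnecessary, though harmless.
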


\begin{proof}
    We just combine Propositions~\ref{prop:tau-rigid} and~\ref{prop:numerical}(2). 
\end{proof}

\begin{rem}\label{rem:fac}
    Let $T$ be a support $\tau$-tilting $A$-module. The corresponding $\bar{A}$-module $T$ is $1$-tilting. By Lemma~\ref{lem:1-tilt}, the exact category ${\rm fac}(T)$ has enough projective objects, and the full subcateory of projective objects coincides with ${\rm add}(T)$; compare \cite[Proposition~2.5]{Zhang}. We emphasize that ${\rm fac}(T)$ inherits the same exact structure both from $A\mbox{-mod}$ and  $\bar{A}\mbox{-mod}$. 
\end{rem}

\begin{thm}\label{thm:tau-tilting}
    Let $T$ be an $A$-module. Set $I={\rm Ann}(T)$ and $\bar{A}=A/I$. Then the $A$-module $T$ is  $\tau$-tilting if and only if the ideal $I$ is nilpotent and the corresponding $\bar{A}$-module $T$ is $1$-tilting satisfying $I\otimes_A T=0$.
\end{thm}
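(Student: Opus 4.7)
The plan is to derive the theorem as an immediate consequence of two results already established in the paper, namely Corollary~\ref{cor:supp} and Proposition~\ref{prop:numerical}(3). Corollary~\ref{cor:supp} translates ``support $\tau$-tilting $A$-module'' into ``$1$-tilting $\bar A$-module with $I\otimes_A T=0$,'' while Proposition~\ref{prop:numerical}(3) decomposes ``$\tau$-tilting $A$-module'' into the conjunction of ``support $\tau$-tilting'' and ``$I$ nilpotent.'' Chaining these two biconditionals yields exactly the statement of the theorem.

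Concretely, for the ``only if'' direction I would assume $T$ is $\tau$-tilting, apply Proposition~\ref{prop:numerical}(3) to conclude that $T$ is support $\tau$-tilting and that $I$ is nilpotent, and then invoke Corollary~\ref{cor:supp} to upgrade the support $\tau$-tilting conclusion to the assertion that the $\bar A$-module $T$ is $1$-tilting and satisfies $I\otimes_A T=0$. For the ``if'' direction I would run the argument in reverse: Corollary~\ref{cor:supp} converts the hypothesis that the $\bar A$-module $T$ is $1$-tilting with $I\otimes_A T=0$ into the statement that $T$ is support $\tau$-tilting, after which the additional hypothesis that $I$ is nilpotent combined with Proposition~\ref{prop:numerical}(3) promotes this to $\tau$-tilting.

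Since both directions are a pure packaging of previously proved biconditionals, no real obstacle is expected. The only conceptual point worth noting is that the nilpotency of $I$ is precisely what is needed to bridge the numerical gap between $|T|=|A/I|$ (the defining equality for support $\tau$-tilting, via Proposition~\ref{prop:numerical}(2)) and $|T|=|A|$ (the defining equality for $\tau$-tilting); this bridge is supplied by Lemma~\ref{lem:card}, and is already built into Proposition~\ref{prop:numerical}(3).
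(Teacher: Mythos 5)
Your proposal is correct and coincides with the paper's own proof, which likewise derives the theorem by combining Corollary~\ref{cor:supp} with Proposition~\ref{prop:numerical}(3). Your additional remark about the role of nilpotency in bridging $|T|=|A/I|$ and $|T|=|A|$ is a faithful unpacking of what is already encoded in Proposition~\ref{prop:numerical}(3) via Lemma~\ref{lem:card}.
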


\begin{proof}
    We just combine Corollary~\ref{cor:supp} and Proposition~\ref{prop:numerical}(3). 
\end{proof}

In the following result, the equivalence between (1) and (2) is essentially  due to \cite[Corollary~3.12]{LW}, which strengthens \cite[Theorem~3.2]{Zhang}.

\begin{thm}\label{thm:tilting}
    Let $T$ be a $\tau$-tilting $A$-module. Set $I={\rm Ann}(T)$ and $\bar{A}=A/I$. Then the following statements are euqivalent.
    \begin{enumerate}
        \item The $A$-module $T$ is $1$-tilting.
        \item ${\rm Ext}_A^2(T, {\rm fac}(T))=0$.
        \item ${\rm Ext}_A^2(T, D(\bar{A}))=0$.
        \item ${\rm Tor}^A_1(I, T)=0$.
    \end{enumerate}
\end{thm}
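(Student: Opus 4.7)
The plan is to close the cycle $(1) \Rightarrow (2) \Rightarrow (3) \Leftrightarrow (4) \Rightarrow (1)$; the first two implications and the equivalence are essentially formal, and the main content lies in $(4) \Rightarrow (1)$. For $(1) \Rightarrow (2)$, a $1$-tilting module has projective dimension at most one, so ${\rm Ext}^2_A(T,-)$ vanishes identically. For $(2) \Rightarrow (3)$, Theorem~\ref{thm:tau-tilting} tells us that the corresponding $\bar{A}$-module $T$ is $1$-tilting, so \cite[VI.2.2~Lemma(d)]{ASS} (as already used in the proof of Proposition~\ref{prop:tau-rigid}) places $D(\bar{A})$ inside ${\rm fac}(T)$, making (3) a special case of (2).

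For $(3) \Leftrightarrow (4)$, I would apply the standard Matlis-duality identification ${\rm Ext}^i_A(T, D(M)) \cong D\,{\rm Tor}^A_i(M, T)$ with $M = \bar{A}$ viewed as a right $A$-module to rewrite ${\rm Ext}^2_A(T, D(\bar{A})) \cong D\,{\rm Tor}^A_2(\bar{A}, T)$. Dimension shifting in the long exact Tor-sequence of $0 \to I \to A \to \bar{A} \to 0$ (using flatness of $A$) then identifies ${\rm Tor}^A_2(\bar{A}, T)$ with ${\rm Tor}^A_1(I, T)$, giving the equivalence.

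The substantive step is $(4) \Rightarrow (1)$. Since $T$ is $\tau$-tilting over $A$, it is already rigid and satisfies $|T|=|A|$, so it suffices to prove ${\rm pd}_A(T) \leq 1$. I would take a surjection $P_0 \twoheadrightarrow T$ with $P_0$ an $A$-projective module, let $\Omega T$ be its kernel, and aim to show $\Omega T$ is $A$-projective. Applying $\bar{A}\otimes_A -$ and invoking both ${\rm Tor}^A_1(\bar{A}, T) \cong I\otimes_A T = 0$ (from Theorem~\ref{thm:tau-tilting}) and ${\rm Tor}^A_2(\bar{A}, T) \cong {\rm Tor}^A_1(I, T) = 0$ (hypothesis~(4)) yields an exact sequence $0 \to \bar{A}\otimes_A \Omega T \to \bar{A}\otimes_A P_0 \to T \to 0$ in $\bar{A}\mbox{-mod}$ together with the vanishing ${\rm Tor}^A_1(\bar{A}, \Omega T) = 0$. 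Since the $\bar{A}$-module $T$ is $1$-tilting, this forces $\bar{A}\otimes_A \Omega T$ to be $\bar{A}$-projective. To descend back to $A$, I would then take an $A$-projective cover $R \twoheadrightarrow \Omega T$ with kernel $K$; applying $\bar{A}\otimes_A -$ and using ${\rm Tor}^A_1(\bar{A}, \Omega T) = 0$, the induced map $\bar{A}\otimes_A R \to \bar{A}\otimes_A \Omega T$ is again a projective cover (since $I\subseteq {\rm rad}(A)$ by nilpotency) onto an already $\bar{A}$-projective target, hence an isomorphism; thus $\bar{A}\otimes_A K = 0$, i.e.\ $K = IK$, and nilpotency of $I$ forces $K=0$, so $\Omega T \cong R$ is $A$-projective. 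In my view the main obstacle is precisely this last descent step: recognising that the combination of the two Tor-vanishings and the nilpotency of $I$ is exactly what pulls $\bar{A}$-projectivity of $\bar{A}\otimes_A\Omega T$ back to $A$-projectivity of $\Omega T$.
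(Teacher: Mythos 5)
Your proposal is correct, and the first three implications $(1)\Rightarrow(2)\Rightarrow(3)\Leftrightarrow(4)$ coincide with the paper's argument. The substantive step is handled by a genuinely different route. The paper proves $(3)\Rightarrow(1)$ by applying ${\rm Hom}_A(-,D(\bar{A}))$ to the coresolution $0\to\bar{A}\to T^0\to T^1\to 0$ coming from the $1$-tilting $\bar{A}$-module $T$, deducing ${\rm Ext}^1_A(\bar{A},D(\bar{A}))\simeq D(I/I^2)=0$, hence $I=I^2$ and so $I=0$ by nilpotency; faithfulness plus Lemma~\ref{lem:faithful}(2) then finishes. You instead prove $(4)\Rightarrow(1)$ by showing ${\rm pd}_A(T)\leq 1$ directly: the two Tor-vanishings $I\otimes_AT=0$ (from Theorem~\ref{thm:tau-tilting}) and ${\rm Tor}_1^A(I,T)=0$ make the reduction $\bar{A}\otimes_A-$ of the presentation $0\to\Omega T\to P_0\to T\to 0$ exact with ${\rm Tor}_1^A(\bar{A},\Omega T)=0$, the inequality ${\rm pd}_{\bar{A}}(T)\leq 1$ forces $\bar{A}\otimes_A\Omega T$ to be $\bar{A}$-projective, and the projective-cover/Nakayama argument together with nilpotency of $I$ lifts this to $A$-projectivity of $\Omega T$. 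Your descent step is sound (the injectivity of $\bar{A}\otimes_AK\to\bar{A}\otimes_AR$, which you correctly extract from ${\rm Tor}_1^A(\bar{A},\Omega T)=0$, is genuinely needed to get $K=IK$ rather than merely $K\subseteq IR$). The trade-off: the paper's proof is shorter and localizes the obstruction in $I/I^2$, while yours avoids both the coresolution of $\bar{A}$ and Lemma~\ref{lem:faithful}(2), at the cost of invoking projective covers and a Schanuel-type argument; it also makes visible that both vanishing conditions $I\otimes_AT=0$ and ${\rm Tor}_1^A(I,T)=0$ enter the descent, which matches the remark following the theorem.
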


\begin{proof}
    Since any $1$-tilting module has project dimension at most one. Then we have ``$(1)\Rightarrow (2)$". The implication ``$(2)\Rightarrow (3)$" is trivial, since $D(\bar{A})$ belongs to ${\rm fac}(T)$. We have the following well-known isomorphisms.
    $${\rm Ext}_A^2(T, D(\bar{A})) \simeq D{\rm Tor}_2^A(\bar{A}, T) \simeq D{\rm Tor}_1^A(I, T)$$
    Then we have the equivalence between (3) and (4).

    It remains to prove ``$(3)\Rightarrow (1)$". Since by Theorem~\ref{thm:tau-tilting} the $\bar{A}$-module $T$ is $1$-tilting, we have an exact sequence
    $$0\longrightarrow \bar{A}\longrightarrow T^0 \longrightarrow T^1\longrightarrow 0$$
   with both $T^i\in {\rm add}(T)$. Applying ${\rm Hom}_A(-, D(\bar{A}))$ to it, we obtain an exact sequence.
   $$ {\rm Ext}_A^1(T^0, D(\bar{A})) \longrightarrow  {\rm Ext}_A^1(\bar{A}, D(\bar{A}))\longrightarrow {\rm Ext}_A^2(T^1, D(\bar{A}))$$
   Since $D(\bar{A})$ belongs to ${\rm fac}(T)$ and $T$ is projective in ${\rm fac}(T)$, 
   we have 
   $${\rm Ext}_A^1(T^0, D(\bar{A}))={\rm Ext}^1_{{\rm fac}(T)}(T^0, D(\bar{A}))=0.$$ 
   By the assumption in (3), we have  ${\rm Ext}_A^2(T^1, D(\bar{A}))=0$.
Consequently, we infer that  ${\rm Ext}_A^1(\bar{A}, D(\bar{A}))=0$. 

We now use the following well-known isomorphisms.
   $${\rm Ext}_A^1(\bar{A}, D(\bar{A}))\simeq D{\rm Tor}_1^A(\bar{A}, \bar{A})\simeq D(I/I^2) $$
   We conclude that $I=I^2$. Since $I$ is nilpotent by Theorem~\ref{thm:tau-tilting}, we infer that $I=0$, that is, the $A$-module $T$ is faithful. Then we are done by Lemma~\ref{lem:faithful}(2). 
\end{proof}

\begin{rem}
    Let $T$ be a $\tau$-tilting $A$-module, which is not $1$-tilting. Set $I={\rm Ann}(T)$. The two theorems above imply that $I\otimes_A T=0$ but ${\rm Tor}_1^A(I, T)\neq 0$. 
\end{rem}

\section{The delooping level}
In this section, we study delooping levels \cite{Gel} in an exact category. The key observation is that finite delooping levels play a role in obtaining  the ${\rm Ext}^2$-vanishing condition in Theorem~\ref{thm:tilting}; see Proposition~\ref{prop:dell-Ext}. 

Let $\mathcal{E}$ be an exact category \cite{Qui73} with enough projective objects. Denote by $\mathcal{P}$ the full subcategory formed by projective objects. The projectively stable category is denoted by $\underline{\mathcal{E}}$. For each object $X$, we take a conflation 
$$0\longrightarrow \Omega_\mathcal{E}(X)\longrightarrow P \longrightarrow X\longrightarrow 0$$
with $P$ projective. This gives rise to the syzygy endofunctor $\Omega_\mathcal{E}\colon \underline{\mathcal{E}}\rightarrow \underline{\mathcal{E}}$. 

The following notion is a categorical analogue of \cite[Definition~1.2]{Gel}. We mention its derived analogue in \cite{GuoI}. 

\begin{defn}
  Let $X$ be an object in $\mathcal{E}$. Its \emph{delooping level}, denoted by ${\rm dell}_\mathcal{E}(X)$, is defined to be minimal nonnegative number $n$ such that $\Omega_\mathcal{E}^n(M)$ is isomorphic to a direct summand of $\Omega_\mathcal{E}^{n+1}(N)$ for some object $N$. If there is no such a number $n$, we set ${\rm dell}_\mathcal{E}(X)=+\infty$.   
\end{defn}

The \emph{global delooping level} of $\mathcal{E}$, denoted by ${\rm gl.dell}(\mathcal{E})$, is defined to be the supremum of ${\rm dell}_\mathcal{E}(X)$ for all objects $X$. 

We observe that ${\rm dell}_\mathcal{E}(X)\leq {\rm pd}_\mathcal{E}(X)$. Here, ${\rm pd}_\mathcal{E}$ denotes the projective dimension in $\mathcal{E}$. Therefore, we have ${\rm gl.dell}(\mathcal{E})\leq {\rm gl.dim}(\mathcal{E})$.

Denote by $\Omega^n_\mathcal{E}(\mathcal{E})$ the full subcategory of $\mathcal{E}$ formed by direct summands of $\Omega_\mathcal{E}^n(X)\oplus P$ for some object $X$ and projective object $P$. Then we have a descending chain of subcategories.
$$\mathcal{E}\supseteq \Omega^1_\mathcal{E}(\mathcal{E}) \supseteq \Omega^2_\mathcal{E}(\mathcal{E})\supseteq \cdots \supseteq \mathcal{P}$$

\begin{exm}\label{exm:dell}
{\rm Assume that the exact category $\mathcal{E}$ is \emph{syzygy-stable}, that is,  there exists $d\geq 0$ such that $\Omega^d_\mathcal{E}(\mathcal{E})=\Omega^{d+1}_\mathcal{E}(\mathcal{E})$. Then ${\rm gl.dell}(\mathcal{E})\leq d$.  

A particular case is of interest. Assume that $\mathcal{E}$ is Krull-Schmidt, which is \emph{syzygy-finite}, that is, there exists an object $E$ such that $\Omega^d_\mathcal{E}(\mathcal{E})={\rm add}(E)$ for some $d\geq 0$. Then the descending chain above is stable. It follows that $\mathcal{E}$ is syzygy-stable.}  
\end{exm}

Let $A$ be an artin algebra. The delooping level of an $A$-module $X$ is denoted by ${\rm dell}_A(X)$. Write ${\rm gl.dell}(A)={\rm gl.dell}(A\mbox{-mod})$, called the \emph{global delooping level} of the algebra $A$. 

\begin{exm}
{\rm (1) Assume that $A$ is $d$-Gorenstein, that is, the selfinjective dimension of $A$ on each side is at most $d$. Then $A\mbox{-mod}$ is syzygy-stable. More precisely, we have $\Omega^d_A(A\mbox{-mod})=\Omega^{d+1}_A(A\mbox{-mod})$. Consequently, we have ${\rm gl.dell}(A)\leq d$. 

(2) The algebra $A$ is called syzygy-finite if so is $A\mbox{-mod}$. Syzygy-finite algebras include algebras of finite representation type, torsionless-finite algebras \cite{Rin},  and monomial ideals by \cite[Theorem~I]{Zim}. Therefore, the global delooping levels of these algebras are finite.}  
\end{exm}

In what follows, we fix   an abelian category $\mathcal{A}$ and  a full subcategory $\mathcal{E}\subseteq \mathcal{A}$ which is closed under extensions. We assume further that the exact category $\mathcal{E}$ has enough projective objects, which form the full subcategory $\omega$.

\begin{prop}\label{prop:dell-Ext}
 Assume that each object in $\omega$ are self-orthogonal in $\mathcal{A}$. Suppose that $T\in \omega$ and $X\in \mathcal{E}$. Then we have  ${\rm Ext}_\mathcal{A}^2(T, X)=0$ provided that ${\rm pd}_\mathcal{A}(T)<\infty$ or ${\rm dell}_\mathcal{E}(X)<\infty$. 
\end{prop}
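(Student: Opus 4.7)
The plan is a dimension-shifting argument in $\mathcal{A}$, using self-orthogonality of $\omega$ to shift ${\rm Ext}^2_\mathcal{A}(T,-)$ along syzygies taken in $\mathcal{E}$. My first observation will be that $\omega$ is closed under finite direct sums (as the projectives of $\mathcal{E}$), so $T\oplus P\in\omega$ for every $P\in\omega$; the self-orthogonality hypothesis applied to $T\oplus P$ then upgrades to the cross-vanishing ${\rm Ext}^i_\mathcal{A}(T,P)=0$ for all $P\in\omega$ and $i\ge 1$. Coupled with extension-closedness of $\mathcal{E}$ in $\mathcal{A}$, which makes any conflation in $\mathcal{E}$ an honest short exact sequence in $\mathcal{A}$, the long exact ${\rm Ext}_\mathcal{A}(T,-)$-sequence attached to a conflation $0\to\Omega_\mathcal{E}(Y)\to P\to Y\to 0$ with $P\in\omega$ yields the dimension-shift isomorphism
$${\rm Ext}^{i}_\mathcal{A}(T,Y)\;\simeq\;{\rm Ext}^{i+1}_\mathcal{A}(T,\Omega_\mathcal{E}(Y))\qquad (i\ge 1).$$

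Iterating this $n$ times produces ${\rm Ext}^2_\mathcal{A}(T,X)\simeq {\rm Ext}^{n+2}_\mathcal{A}(T,\Omega^n_\mathcal{E}(X))$ for every $n\ge 0$, which I will use as the common engine for the two cases. If ${\rm pd}_\mathcal{A}(T)=d<\infty$, I simply choose $n$ with $n+2>d$ and the right-hand side vanishes outright. If instead ${\rm dell}_\mathcal{E}(X)=m<\infty$, the definition furnishes some $N\in\mathcal{E}$ and $Q\in\omega$ for which $\Omega^m_\mathcal{E}(X)$ is a direct summand of $\Omega^{m+1}_\mathcal{E}(N)\oplus Q$ in $\mathcal{E}$; the cross-vanishing kills the $Q$-contribution, exhibiting ${\rm Ext}^2_\mathcal{A}(T,X)$ as a direct summand of ${\rm Ext}^{m+2}_\mathcal{A}(T,\Omega^{m+1}_\mathcal{E}(N))$. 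Applying the dimension-shift isomorphism in reverse $m+1$ times then telescopes the latter down to ${\rm Ext}^1_\mathcal{A}(T,N)$.

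To close the second case, I will appeal once more to extension-closedness: every extension of $T$ by $N$ in $\mathcal{A}$ has middle term in $\mathcal{E}$, so ${\rm Ext}^1_\mathcal{A}(T,N)={\rm Ext}^1_\mathcal{E}(T,N)$, and this vanishes since $T\in\omega$ is projective in $\mathcal{E}$.

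The only genuinely subtle point, and thus the main obstacle, is the interplay between $\mathcal{E}$ and $\mathcal{A}$: without extension-closedness, the long exact sequence attached to a conflation in $\mathcal{E}$ would not live inside ${\rm Ext}_\mathcal{A}$, and the identification of ${\rm Ext}^1_\mathcal{E}(T,N)$ with ${\rm Ext}^1_\mathcal{A}(T,N)$ would fail; likewise, the self-orthogonality hypothesis is useful only because $\omega$ is stable under finite sums, so that the cross-vanishing ${\rm Ext}^i_\mathcal{A}(T,P)=0$ is actually available. Once these two points are secured, the argument reduces to careful bookkeeping on Ext-degrees.
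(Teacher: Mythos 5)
Your proposal is correct and follows essentially the same route as the paper: the same dimension-shift isomorphism ${\rm Ext}^i_\mathcal{A}(T,Y)\simeq{\rm Ext}^{i+1}_\mathcal{A}(T,\Omega_\mathcal{E}(Y))$ via cross-vanishing on $\omega$, the same treatment of the finite projective dimension case, and in the delooping case your telescoping of ${\rm Ext}^{m+2}_\mathcal{A}(T,\Omega^{m+1}_\mathcal{E}(N))$ down to ${\rm Ext}^1_\mathcal{A}(T,N)={\rm Ext}^1_\mathcal{E}(T,N)=0$ is exactly the content of the paper's Lemma~\ref{lem:Ext}(2). No substantive differences.
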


We emphasize  that for objects $X, Y\in \mathcal{E}$, the natural map
$${\rm Ext}_\mathcal{E}^2(X, Y)\longrightarrow {\rm Ext}_\mathcal{A}^2(X, Y)$$
is injective, but not surjective in general; see \cite[Proposition~3.4]{DI}. For the proof of Proposition~\ref{prop:dell-Ext}, we need the following easy observations.

\begin{lem}\label{lem:Ext}
    Keep the same assumptions above. For any $T\in \omega$ and any object $Y\in \mathcal{E}$, the following statements hold.
    \begin{enumerate}
        \item ${\rm Ext}^i_\mathcal{A}(T, Y)\simeq {\rm Ext}^{i+1}_\mathcal{A}(T, \Omega_\mathcal{E}(Y))$ for $i\geq 1$.
        \item ${\rm Ext}^j_\mathcal{A}(T, \Omega_\mathcal{E}^k(Y))=0$ for $1\leq j\leq k+1$. 
    \end{enumerate}
\end{lem}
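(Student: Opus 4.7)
The plan is to feed the defining syzygy conflation into the long exact sequence of ${\rm Ext}_\mathcal{A}^*$ and exploit the self-orthogonality hypothesis to contract it. For part (1), I would fix $T \in \omega$ and pick a conflation $0 \to \Omega_\mathcal{E}(Y) \to P \to Y \to 0$ with $P \in \omega$ realising the syzygy of $Y$; since $\mathcal{E} \subseteq \mathcal{A}$ is extension-closed, this is in particular a short exact sequence in $\mathcal{A}$, so applying ${\rm Hom}_\mathcal{A}(T, -)$ yields a long exact sequence. The groups ${\rm Ext}_\mathcal{A}^i(T, P)$ and ${\rm Ext}_\mathcal{A}^{i+1}(T, P)$ flanking the connecting map both vanish for $i \geq 1$: since $\omega$ is closed under finite direct sums, $T \oplus P$ again lies in $\omega$, and its self-orthogonality in $\mathcal{A}$ gives ${\rm Ext}_\mathcal{A}^{\geq 1}(T, P) = 0$. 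Hence the connecting map is an isomorphism, which is exactly the assertion of (1).

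For part (2) I would proceed by induction on $k$. The base case $k = 0$ reduces to showing ${\rm Ext}_\mathcal{A}^1(T, Z) = 0$ for every $Z \in \mathcal{E}$ and every $T \in \omega$. The key point is that the comparison map ${\rm Ext}_\mathcal{E}^1(T, Z) \to {\rm Ext}_\mathcal{A}^1(T, Z)$ is an isomorphism, because $\mathcal{E}$ is extension-closed in $\mathcal{A}$: any extension in $\mathcal{A}$ of $T$ by $Z$ has middle term automatically in $\mathcal{E}$ and is therefore a conflation. But ${\rm Ext}_\mathcal{E}^1(T, Z) = 0$ since $T \in \omega$ is projective in $\mathcal{E}$. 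The inductive step is then immediate from (1) applied with $Y$ replaced by $\Omega_\mathcal{E}^{k-1}(Y)$: for $2 \leq j \leq k+1$ the isomorphism ${\rm Ext}_\mathcal{A}^j(T, \Omega_\mathcal{E}^k(Y)) \simeq {\rm Ext}_\mathcal{A}^{j-1}(T, \Omega_\mathcal{E}^{k-1}(Y))$ shifts the range $1 \leq j-1 \leq k$ into the inductive hypothesis, while the remaining case $j = 1$ is covered by the base case applied to $Z = \Omega_\mathcal{E}^k(Y)$.

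I do not anticipate a real obstacle; the only subtlety worth flagging is that the syzygy $\Omega_\mathcal{E}^k(Y)$ is only well-defined up to direct summands in $\omega$, but the self-orthogonality assumption guarantees that ${\rm Ext}_\mathcal{A}^{\geq 1}(T, -)$ is insensitive to such summands, so both formulas make unambiguous sense. The whole argument is a clean diagram chase once one notices that extension-closedness upgrades the exact-category projectivity of $T$ to $\mathcal{A}$-level ${\rm Ext}^1$-vanishing, so that the syzygy-shift trick works inside the ambient abelian category.
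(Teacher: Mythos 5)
Your proof is correct and follows essentially the same route as the paper: the long exact sequence of ${\rm Ext}_\mathcal{A}(T,-)$ applied to the syzygy conflation, with the flanking terms killed by self-orthogonality of $\omega$, for part (1); and for part (2) the identification ${\rm Ext}^1_\mathcal{A}(T,Z)\simeq{\rm Ext}^1_\mathcal{E}(T,Z)=0$ via extension-closedness as the base case, followed by induction using (1). Your added remarks (that $T\oplus P\in\omega$ justifies ${\rm Ext}^{\geq 1}_\mathcal{A}(T,P)=0$, and that the statements are insensitive to the choice of syzygy) are correct refinements of details the paper leaves implicit.
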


\begin{proof}
For (1), we consider the conflation
$$0\longrightarrow \Omega_\mathcal{E}(Y)\longrightarrow P \longrightarrow Y\longrightarrow 0$$
with $P$ projective in $\mathcal{E}$. The self-orthogonality condition  on $\omega$ implies that ${\rm Ext}_\mathcal{A}^i(T, P)=0$ for any $i\geq 1$. Applying ${\rm Hom}_\mathcal{A}(T, -)$ to the conflation above, we infer (1). 

Since $\mathcal{E}$ is closed under extensions in $\mathcal{A}$, we have ${\rm Ext}^1_\mathcal{A}(T, Y)={\rm Ext}^1_\mathcal{E}(T, Y)=0$. This yields (2) in the case $j=1$. By induction, the general case follows immediately from (1). 
\end{proof}

\noindent \emph{Proof of Proposition~\ref{prop:dell-Ext}.}\; 
By applying  Lemma~\ref{lem:Ext}(1) repeatedly,  we have an isomorphism
\begin{align}\label{equ:Ext}
    {\rm Ext}^{2}_\mathcal{A}(T, X)\simeq {\rm Ext}^{n+2}_\mathcal{A}(T, \Omega_\mathcal{E}^n(X)).
\end{align}
for any $n\geq 0$. If ${\rm pd}_\mathcal{A}(T)$ is finite, we are done.

We assume that ${\rm dell}_\mathcal{E}(X)=d$. There exists some object $X'\in \mathcal{E}$ and a projective object $P$ in $\mathcal{E}$, such that $\Omega_\mathcal{E}^d(X)$ is isomorphic to a direct summand of $\Omega_\mathcal{E}^{d+1}(X')\oplus P$. By Lemma~\ref{lem:Ext}(2), we have 
$${\rm Ext}^{d+2}_\mathcal{A}(T, \Omega_\mathcal{E}^{d+1}(X')\oplus P)=0.$$
Consequently, we have 
$${\rm Ext}^{d+2}_\mathcal{A}(T, \Omega_\mathcal{E}^d(X))=0.$$
Combining this with (\ref{equ:Ext}), we infer the required vanishing. \hfill $\square$

\begin{rem}
    The proof above yields a slightly stronger result. Fix $d\geq 0$.  Assume that ${\rm Ext}_\mathcal{A}^i(P, P)=0$ for any $P\in \omega$ and $2\leq i\leq d+2$. Then we still have 
    ${\rm Ext}^{2}_\mathcal{A}(T, X)=0$, provided that ${\rm pd}_\mathcal{A}(T)\leq d+2$ or  ${\rm dell}_\mathcal{E}(X)\leq d$.
\end{rem}

\section{Homological conjectures}

In this section, we study the self-orthogonal $\tau$-tilting conjecture \cite{Zhang}, which is implied by  a conjecture in \cite{Eno}. Theorem~\ref{thm:dell-B} confirms the conjecture under the assumption that the endomorphism algebra of the $\tau$-tilting module has finite global delooping level. 

In what follows, we assume that $A$ is an artin algebra. The following conjecture is posted in \cite[Conjecture~5.9]{Eno}.

\vskip 5pt

\noindent \emph{Self-orthogonal Wakamatsu-tilting Conjecture} (SWC). \; Let $T$ be a self-orthogonal $A$-module with $|T|=|A|$. Then $T$ is Wakamatsu-tilting. 

\vskip 5pt

We recall from \cite{Wak, MR} that a self-orthogonal $A$-module $W$ is called \emph{Wakamatsu-tilting} if there is a long exact sequence
$$0\longrightarrow A\longrightarrow T^0\longrightarrow T^1\longrightarrow T^2\longrightarrow \cdots$$
with each $T^i\in {\rm add}(W)$ and each cocycle in $^{\perp} W=\{M\; |\; {\rm Ext}_A^i(M, W)=0, i\geq 1\}$. We observe that a Wakamatsu-tilting module is necessarily faithful.

\vskip 5pt

\noindent \emph{Self-orthogonal Faithful Conjecture} (SFC). \; Let $T$ be a self-orthogonal $A$-module with $|T|=|A|$. Then $T$ is faithful.

\vskip 5pt

We mention that if $A$ is selfinjective, then (SFC) is equivalent to the following well-known Tachikawa's Conjecture \cite{Tach}: any self-orthognal module over a selfinjective alegrba is projective. Here, we use the fact that any faithful $A$-module $T$ with $|T|=|A|$ is necessarily a projective generator. 

Recall that any $\tau$-tilting $A$-module $T$ satisfies $|T|=|A|$. Since any faithful $\tau$-tilting module is $1$-tilting, we infer that (SFC) implies the following conjecture posted in \cite{Zhang}.

\vskip 5pt

\noindent \emph{Self-orthogonal $\tau$-tilting Conjecture} (S$\tau$C). \; Let $T$ be a self-orthogonal $\tau$-tilting $A$-module. Then $T$ is $1$-tilting. 

\vskip 5pt

In summary, we have the following implications for any given algebra $A$.
\begin{center}
     (SWC) $\Rightarrow$ (SFC) $\Rightarrow$  (S$\tau$C)
\end{center}

\begin{rem}\label{rem:local}
    It is well known that for a local algebra $A$,  the only basic $\tau$-tilting $A$-module is isomorphic to $A$; see \cite[Example~6.1]{AIR}. Therefore, (S$\tau$C) holds trivially for local algebras.
\end{rem}

Let $T$ be a $\tau$-tilting $A$-module. Then the exact category ${\rm fac}(T)$ has enough projective objects, which are precisely modules in ${\rm add}(T)$; see Remark~\ref{rem:fac}. The delooping levels in ${\rm fac}(T)$ will be denoted by ${\rm dell}_T$, which might be viewed as a relative version of the delooping level in \cite{Gel}. 

The following result is partly due to \cite[Theorem~1.3]{Zhang}.

\begin{prop}\label{prop:dell}
    Let $T$ be a self-orthogonal $\tau$-tilting $A$-module. Set $\bar{A}=A/{{\rm Ann}(T)}$.  Assume that either ${\rm pd}_A(T)<+\infty$ or ${\rm dell}_T(D\bar{A})<+\infty$. Then  $T$ is $1$-tilting. 
\end{prop}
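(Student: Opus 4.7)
The plan is to derive Proposition~\ref{prop:dell} by combining the characterization in Theorem~\ref{thm:tilting} with the $\text{Ext}^2$-vanishing result in Proposition~\ref{prop:dell-Ext}. Concretely, since $T$ is $\tau$-tilting, Theorem~\ref{thm:tilting} tells us that $T$ is $1$-tilting if and only if ${\rm Ext}_A^2(T, D(\bar{A}))=0$. Hence the goal reduces to proving this single vanishing statement under either of the two hypotheses.

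To apply Proposition~\ref{prop:dell-Ext}, I would take $\mathcal{A}=A\mbox{-mod}$ and $\mathcal{E}={\rm fac}(T)$. First I would check that $\mathcal{E}$ is extension-closed in $\mathcal{A}$ (this follows from $\tau$-rigidity via \cite[Corollary~5.9]{AS}, already noted in the paper), that $\mathcal{E}$ has enough projective objects, and that these projectives are precisely ${\rm add}(T)$: this is exactly the content of Remark~\ref{rem:fac}, which uses that the $\bar{A}$-module $T$ is $1$-tilting (Corollary~\ref{cor:supp}) together with Lemma~\ref{lem:1-tilt}. Thus $\omega={\rm add}(T)$, and every object of $\omega$ is self-orthogonal in $A\mbox{-mod}$ by the standing assumption on $T$. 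Next, I would recall that $D(\bar{A})$ lies in ${\rm fac}(T)$ (again from Lemma~\ref{lem:1-tilt} applied to the $1$-tilting $\bar{A}$-module $T$, or from \cite[VI.2.2~Lemma(d)]{ASS}), so $X:=D(\bar{A})$ is a legitimate object of $\mathcal{E}$.

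With these hypotheses verified, Proposition~\ref{prop:dell-Ext} gives ${\rm Ext}_A^2(T, D(\bar{A}))=0$ as soon as either ${\rm pd}_A(T)<\infty$ or ${\rm dell}_\mathcal{E}(D(\bar{A}))<\infty$. Since ${\rm dell}_\mathcal{E}$ with $\mathcal{E}={\rm fac}(T)$ is exactly what the paper denotes by ${\rm dell}_T$, this matches the two cases in the hypothesis. The conclusion then follows from the implication $(3)\Rightarrow(1)$ of Theorem~\ref{thm:tilting}.

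The only step that requires any care is checking that the relative syzygy implicit in ${\rm dell}_T(D(\bar{A}))$ is computed in the exact structure ${\rm fac}(T)$ inherited from $A\mbox{-mod}$, so that Proposition~\ref{prop:dell-Ext} is literally applicable; this is ensured by the last sentence of Remark~\ref{rem:fac}, which records that the exact structures on ${\rm fac}(T)$ coming from $A\mbox{-mod}$ and from $\bar{A}\mbox{-mod}$ agree. I expect no other obstacle: the argument is a direct assembly of Theorem~\ref{thm:tilting}, Remark~\ref{rem:fac}, and Proposition~\ref{prop:dell-Ext}.
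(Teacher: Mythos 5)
Your proposal is correct and is essentially identical to the paper's proof: both reduce to the ${\rm Ext}_A^2(T,D(\bar A))=0$ criterion of Theorem~\ref{thm:tilting}(3), verify the hypotheses of Proposition~\ref{prop:dell-Ext} for ${\rm fac}(T)\subseteq A\mbox{-mod}$ via Remark~\ref{rem:fac}, and apply that proposition to $X=D(\bar A)$. The paper's version is just terser; your extra checks (that $D(\bar A)\in{\rm fac}(T)$ and that the exact structures agree) are exactly the details it leaves implicit.
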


\begin{proof}
    By Remark~\ref{rem:fac}, the assumptions in Proposition~\ref{prop:dell-Ext} hold for ${\rm fac}(T)\subseteq A\mbox{-mod}$. Applying Proposition~\ref{prop:dell-Ext}, we obtain ${\rm Ext}_A^2(T, D(\bar{A}))=0$. Using Theorem~\ref{thm:tilting}(3), we are done. 
\end{proof}

Recall that an algebra $A$ is \emph{minimal representation-infinite} if it is of infinite representation type and any proper quotient algebra is of finite representation type; see \cite{Rin11}. In the following result, we mention that (2) is due to \cite{Zhang, LyW}; compare \cite{Chang}. 

\begin{prop}\label{prop:conj}
    The conjecture  {\rm  (S$\tau$C) } holds for the following classes of algebras.
    \begin{enumerate}
    \item Local algebras. 
        \item Syzygy-finite  Gorenstein algebras, including algebras with finite global dimension. 
        \item Algebras of finite representation type.
        \item Minimal representation-infinite algebras.  
    \end{enumerate}
\end{prop}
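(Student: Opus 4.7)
The plan is to invoke Proposition~\ref{prop:dell} in each of items (2)--(4), with item (1) being immediate.

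Item (1) follows directly from Remark~\ref{rem:local}: over a local algebra the only basic $\tau$-tilting module (up to isomorphism) is $A$, which is $1$-tilting. For item (2), the plan is to recall the known fact from \cite{Mar} (compare \cite{LyW, Chang}) that over a syzygy-finite Gorenstein algebra every self-orthogonal module has finite projective dimension; one then invokes the ${\rm pd}_A(T)<+\infty$ branch of Proposition~\ref{prop:dell}.

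For items (3) and (4), the uniform strategy is to prove ${\rm gl.dell}({\rm fac}(T))<+\infty$ by verifying that ${\rm fac}(T)$ is a Krull--Schmidt exact category with only finitely many isomorphism classes of indecomposable objects. Once that is in hand, each $\Omega^i_{{\rm fac}(T)}({\rm fac}(T))$ is determined by a subset of those finitely many indecomposables, so the descending chain
$${\rm fac}(T)\supseteq \Omega^1_{{\rm fac}(T)}({\rm fac}(T)) \supseteq \Omega^2_{{\rm fac}(T)}({\rm fac}(T)) \supseteq \cdots$$
must stabilize; Example~\ref{exm:dell} then forces ${\rm gl.dell}({\rm fac}(T))<+\infty$, in particular ${\rm dell}_T(D\bar{A})<+\infty$, and Proposition~\ref{prop:dell} concludes. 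Case (3) is automatic, since $A\mbox{-mod}$ itself has only finitely many indecomposables, hence so does its full subcategory ${\rm fac}(T)$. Case (4) proceeds by a dichotomy on ${\rm Ann}(T)$: if ${\rm Ann}(T)=0$ then $T$ is a faithful $\tau$-tilting module, hence $1$-tilting by Lemma~\ref{lem:faithful}(2); otherwise $\bar{A}=A/{\rm Ann}(T)$ is a proper quotient of $A$, which by the minimal-representation-infinite hypothesis has finite representation type, and the finiteness of indecomposables in ${\rm fac}(T)\subseteq \bar{A}\mbox{-mod}$ follows as in case (3).

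The main subtle point I anticipate is in case (4): one must invoke Remark~\ref{rem:fac} to identify the exact structure (and hence the syzygy functor) on ${\rm fac}(T)$ computed inside $\bar{A}\mbox{-mod}$ with the one computed inside $A\mbox{-mod}$, so that the finite delooping level coming from the finite representation type of $\bar{A}$ is exactly what feeds into Proposition~\ref{prop:dell-Ext} to produce the ${\rm Ext}^2_A(T, D\bar{A})$-vanishing required by Theorem~\ref{thm:tilting}(3).
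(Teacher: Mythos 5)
Your proposal is correct and follows essentially the same route as the paper: item (1) via Remark~\ref{rem:local}, item (2) via Marczinzik's finiteness of projective dimension for self-orthogonal modules over CM-finite Gorenstein algebras together with Proposition~\ref{prop:dell}, and items (3)--(4) by showing ${\rm fac}(T)$ is syzygy-finite (using the proper-quotient dichotomy for the minimal representation-infinite case) and invoking Example~\ref{exm:dell} and Proposition~\ref{prop:dell}. The subtle point you flag about the exact structure on ${\rm fac}(T)$ agreeing inside $A$-mod and $\bar{A}$-mod is exactly what the paper addresses in Remark~\ref{rem:fac}.
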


\begin{proof}

 Let $T$ be a self-orthogonal $\tau$-tilting $A$-module. We may assume that $T$ is not faithful. 
 
For (1), we refer to Remark~\ref{rem:local}.   For a Gorenstein algebra, it is syzygy-finite if and only if it is CM-finite. By \cite[Corollary~2.3]{Mar}, any self-orthogonal module over a CM-finite Gorenstein algebra has finite projective dimension.  Then Proposition~\ref{prop:dell} applies to (2)

  We observe that in cases (3) and (4), the exact category ${\rm fac}(T)$ is syzygy-finite, and thus has finite global delooping level; see Example~\ref{exm:dell}.   For (4), we observe that ${\rm fac}(T)$ is a subcategory of $\bar{A}\mbox{-mod}$.  Therefore, the quotient algebra $\bar{A}$ is of finite representation type. Consequently, the category  ${\rm fac}(T)$  is syzygy-finite.  
\end{proof}

The following theorem indicates that the delooping level might play a role in the study of  (S$\tau$C). 

\begin{thm}\label{thm:dell-B}
    Let $T$ be a self-orthogonal $\tau$-tilting $A$-module. Set $B={\rm End}_A(T)^{\rm op}$. If ${\rm dell}_B(DT)$ is finite, then $T$ is $1$-tilting. Consequently, if ${\rm gl.dell}(B)$ is finite, then $T$ is $1$-tilting.
\end{thm}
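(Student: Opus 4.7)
The plan is to reduce, via Theorem~\ref{thm:tilting}, to showing ${\rm Ext}^2_A(T, D\bar A) = 0$ (where $\bar A = A/{\rm Ann}(T)$), and then to obtain this vanishing by transporting the finiteness of ${\rm dell}_B(DT)$ across the Brenner--Butler tilting equivalence to establish ${\rm dell}_T(D\bar A)<\infty$, at which point Proposition~\ref{prop:dell} applies and delivers the conclusion.

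First, by Corollary~\ref{cor:supp}, $T$ is a $1$-tilting $\bar A$-module, and classical tilting theory supplies an exact equivalence $F := {\rm Hom}_{\bar A}(T, -) \colon {\rm fac}(T) \xrightarrow{\sim} \mathcal{Y}(T) \subseteq B\text{-mod}$, where $\mathcal{Y}(T) = \ker {\rm Tor}_1^B(T, -)$, with quasi-inverse $G := T \otimes_B -$. Under $F$ one has $D\bar A \mapsto DT$ and $T \mapsto B$, and ${\rm add}(T)$ corresponds to ${\rm add}(B)$. Being exact and sending relative projective covers to projective covers, $F$ intertwines the syzygies: $F(\Omega_{{\rm fac}(T)}^k(X)) \cong \Omega_B^k(F(X))$ for all $X \in {\rm fac}(T)$ and $k \geq 0$. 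Moreover, the tilting theorem on the $\bar A$-side delivers ${\rm pd}_{B^{\rm op}}(T) \leq 1$, hence ${\rm Tor}_i^B(T, -) = 0$ for all $i \geq 2$; in particular $\mathcal{Y}(T)$ is closed under taking syzygies in $B\text{-mod}$, and $\Omega_B(N) \in \mathcal{Y}(T)$ for every $B$-module $N$.

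The key step will be to show that ${\rm dell}_B(DT) < \infty$ implies ${\rm dell}_T(D\bar A) < \infty$. Given a witness exhibiting $\Omega_B^d(DT)$ as a direct summand of $\Omega_B^{d+1}(N)$ (modulo projectives) with $N \in B\text{-mod}$, I would exploit the stable identity $\Omega_B^{d+1}(N) = \Omega_B^d(\Omega_B(N))$ together with $\Omega_B(N) \in \mathcal{Y}(T)$ to relocate the co-witness inside $\mathcal{Y}(T)$, and then transport it back to ${\rm fac}(T)$ through $G$, obtaining an object of ${\rm fac}(T)$ certifying ${\rm dell}_T(D\bar A)$ is finite. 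Once this is in hand, Proposition~\ref{prop:dell} yields ${\rm Ext}^2_A(T, D\bar A) = 0$, and Theorem~\ref{thm:tilting}(3) concludes that $T$ is $1$-tilting. The ``consequently'' statement is then immediate from ${\rm dell}_B(DT) \leq {\rm gl.dell}(B)$.

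The main obstacle I anticipate is the comparison of delooping levels in the third paragraph. The definition of ${\rm dell}_B(DT)$ allows the co-witness $N$ to be an arbitrary $B$-module, whereas ${\rm dell}_T(D\bar A)$ effectively requires co-witnesses from $\mathcal{Y}(T)$ under the equivalence; re-choosing the witness via the passage $N \rightsquigarrow \Omega_B(N)$ shifts the syzygy index, so matching indices requires care. The whole argument hinges on the structural property that first syzygies in $B\text{-mod}$ already lie in $\mathcal{Y}(T)$, which rests entirely on the classical tilting consequence ${\rm pd}(T_B) \leq 1$.
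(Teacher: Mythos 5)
Your overall route coincides with the paper's: reduce via Theorem~\ref{thm:tilting} to ${\rm Ext}^2_A(T,D\bar{A})=0$, obtain that from Proposition~\ref{prop:dell} once ${\rm dell}_T(D\bar{A})<\infty$, and get the latter by transporting ${\rm dell}_B(DT)<\infty$ across the Brenner--Butler equivalence. The gap is exactly the step you flag as ``requires care'', and it is not cosmetic. From $\Omega_B^d(DT)\mid\Omega_B^{d+1}(N)\oplus Q$, your substitution $N\rightsquigarrow\Omega_B(N)$ rewrites $\Omega_B^{d+1}(N)$ as $\Omega_{\mathcal{Y}}^{d}(\Omega_B(N))$ with $\Omega_B(N)\in\mathcal{Y}(T)$; this is a \emph{diagonal} containment $\Omega_{\mathcal{Y}}^d(DT)\mid\Omega_{\mathcal{Y}}^{d}(N'')\oplus Q$, whereas ${\rm dell}_{\mathcal{Y}(T)}(DT)\le d$ demands $\Omega_{\mathcal{Y}}^d(DT)\mid\Omega_{\mathcal{Y}}^{d+1}(N')\oplus Q'$ with $N'\in\mathcal{Y}(T)$. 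The off-by-one is fatal downstream: the proof of Proposition~\ref{prop:dell-Ext} kills ${\rm Ext}^{d+2}_{\mathcal{A}}(T,\Omega^{d+1}_{\mathcal{E}}(X'))$ by Lemma~\ref{lem:Ext}(2) with $j=d+2\le k+1$ precisely because $k=d+1$; with $k=d$ one only gets vanishing up to ${\rm Ext}^{d+1}$, and the remaining case ${\rm Ext}^{d+2}_A(T,G(\Omega_B^{d+1}N))$ dimension-shifts (through the conflations $0\to G\Omega_B^{k+1}N\to GP_k\to G\Omega_B^kN\to 0$, $k\ge 1$) back to ${\rm Ext}^2_A(T,G(\Omega_BN))$ with $G(\Omega_BN)\in{\rm fac}(T)$ --- i.e.\ to an instance of the very statement ${\rm Ext}^2_A(T,{\rm fac}(T))=0$ being proved. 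So, as sketched, the argument is circular at the decisive point.

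The paper closes this step by a different structural observation: $\mathcal{S}={\rm sub}(DT)$ is a torsionfree class in $B\mbox{-mod}$ containing all projective modules, hence closed under \emph{all} submodules (not merely under first syzygies), and from this it asserts the genuine equality ${\rm dell}_{\mathcal{S}}(X)={\rm dell}_B(X)$ for $X\in\mathcal{S}$, giving ${\rm dell}_T(D\bar{A})={\rm dell}_B(DT)$ on the nose. To complete your version you must actually prove the inequality ${\rm dell}_{\mathcal{Y}(T)}(DT)\le{\rm dell}_B(DT)$, i.e.\ exhibit a co-witness in $\mathcal{Y}(T)$ whose $(d+1)$-st relative syzygy contains $\Omega^d(DT)$; the single fact $\Omega_B(N)\in\mathcal{Y}(T)$, which is all your sketch uses, does not deliver this.
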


Here, $B={\rm End}_A(T)^{\rm op}$ is the opposite algebra of the endomorphism algebra of $T$. In particular, $T$ becomes an $A$-$B$-bimodule, and $D(T)$ a $B$-$A$-bimodule. 

\begin{proof}
Recall that $T$ is a $1$-tilting $\bar{A}$-module, with $\bar{A}=A/{{\rm Ann}(T)}$. By the classical theorem of Brenner-Bulter \cite[VI.3]{ASS}, we have an equivalence between exact categories
$$F={\rm Hom}_A(T, -)\colon {\rm fac}(T) \longrightarrow {\rm sub}(DT)=\mathcal{S}.$$
Moreover, ${\rm sub}(DT)$ is a torsionfree class in $B\mbox{-mod}$, which contains all projective $B$-modules. Consequently, we have ${\rm dell}_\mathcal{S}(X)={\rm dell}_B(X)$ for any $X\in \mathcal{S}$.

We observe that $F(D\bar{A})\simeq DT$. Consequently, we have
$${\rm dell}_T(D\bar{A})={\rm dell}_{\mathcal{S}}(DT)={\rm dell}_B(DT).$$
Then the required statement follows from Proposition~\ref{prop:dell}.
\end{proof}

The $B$-module $D(T)$ above is $1$-cotilting; \cite[VI.3.8~Theorem(a)]{ASS}. Since (S$\tau$C) holds for local alebras, we may assume that $A$ and thus $B$ are non-local. Therefore, it would be nice to explore the following problem.

\vskip 5pt

\noindent \emph{Problem.}\; Construct a $1$-cotilting module over a non-local algebra $B$ with infinite delooping level. 

\vskip 5pt

We mention the work \cite{KR}, where  explicit modules with infinite delooping level are studied. 

\vskip 10pt

\noindent{\bf Acknowledgements}. \quad  This project is supported by National Key R$\&$D Program of China (No. 2024YFA1013801) and  the National Natural Science Foundation of China (No.s 12325101, 12131015, 12171207,  12371015, and  12371038).

\bibliography{}

\vskip 10pt

 {\footnotesize \noindent  Xiao-Wu Chen\\
 School of Mathematical Sciences, University of Science and Technology of China, Hefei 230026, Anhui, PR China\\

 \footnotesize \noindent Zhi-Wei Li, Xiaojin Zhang\\
School of Mathematics and Statistics, Jiangsu Normal University, Xuzhou 221116, Jiangsu, PR China\\

 \footnotesize \noindent Zhibing Zhao\\
 School of Mathematical Sciences, Anhui University,  Hefei 230601, Anhui, PR China}

\end{document}